\newcommand{\ri}{\mathfrak{o}}
\newcommand{\mi}{\mathfrak{p}}
\newcommand{\Z}{\mathbf{Z}}
\newcommand{\C}{\mathbf{C}}
\newcommand{\e}{{\varepsilon}}
\newcommand{\p}{\varpi}
\def\Section#1{\section{#1}\setcounter{equation}{0}}
\theoremstyle{plain}
\newtheorem{thm}[equation]{Theorem}
\newtheorem{lem}[equation]{Lemma}
\newtheorem{prop}[equation]{Proposition}
\newtheorem{cor}[equation]{Corollary}
\theoremstyle{definition}
\newtheorem{rem}[equation]{Remark}
\title{Whittaker functions associated to newforms  for $\mathrm{GL}(n)$ over $p$-adic fields}
\author{Michitaka Miyauchi}
\date{}
\keywords{local newform, Whittaker function}
\subjclass[2010]{Primary 22E50, 22E35}
\address{
Department of Mathematics, Faculty of Science\\
Kyoto University\\
Oiwake Kita-Shirakawa Sakyo Kyoto 606-8502 JAPAN
}
\email{miyauchi@math.kyoto-u.ac.jp}
\begin{document}

\begin{abstract}
Let $F$ be a non-archimedean local field of characteristic zero.
Jacquet, Piatetski-Shapiro and Shalika introduced the notion of 
newforms for irreducible generic representations of $\mathrm{GL}_n(F)$.
In this paper, we give an explicit formula for 
Whittaker functions associated to newforms 
on the diagonal matrices in $\mathrm{GL}_n(F)$.
\end{abstract}

\maketitle
\pagestyle{myheadings}
\markboth{}{}

\section{Introduction}
Let $F$ be a non-archimedean local field
of characteristic zero.
Shintani \cite{Shintani} gave
an explicit formula for spherical Whittaker functions
of unramified principal series representations
of $\mathrm{GL}_n(F)$.
His formula is a key to the unramified computation
of Rankin-Selberg type zeta integrals
(see for example \cite{B-F} and \cite{B-G}).
In this paper,
we extend Shintani's result to Whittaker functions associated to 
newforms for $\mathrm{GL}_n(F)$.

Jacquet, Piatetski-Shapiro and Shalika \cite{JPSS}
introduced the notion of newforms for irreducible generic representations of $\mathrm{GL}_n(F)$,
which is an extension of that for $\mathrm{GL}_2(F)$ by
Casselman \cite{Casselman}.
Newforms for $\mathrm{GL}_n(F)$
are defined by using a certain family of open compact subgroups
$\{K_n\}_{n \geq 0}$.
Given an irreducible generic representation $\pi$
of $\mathrm{GL}_n(F)$,
the smallest integer $c(\pi)$ among those
$n$ such that $\pi$ has $K_n$-fixed vectors
is called the conductor of $\pi$.
We say that a vector in $\pi$ is a newform
if it is fixed by $K_{c(\pi)}$.
When the conductor of $\pi$ is zero,
its newforms are just $\mathrm{GL}_n(\ri)$-fixed vectors,
where $\ri$ is the ring of integers in $F$.
In this paper, 
we give an explicit formula 
for Whittaker functions associated to newforms
on the diagonal matrices in $\mathrm{GL}_n(F)$.

Originally,
Shintani's explicit formula for spherical Whittaker functions
is written in terms of Hecke eigenvalues.
We will follow his method.
For an irreducible generic representation $\pi$,
the Hecke algebra associated to $K_{c(\pi)}$
acts on the space of its newforms.
Since this space is one-dimensional,
the actions of elements in this  Hecke algebra are 
given by scalar multiplication.
Suppose that the conductor of $\pi$ is positive.
Then, similar to the unramified case,
we get a formula for the Whittaker function $W$
associated to a newform 
on $T_1=\{\mathrm{diag}(a_1,\ldots, a_{n-1},1)\, |\, a_i \in F^\times\}$
in terms of Hecke eigenvalues $\lambda_1, \ldots, \lambda_{n-1}$
(see section~\ref{sec:hecke} for precise definition).
Kondo and Yasuda \cite{K-S} showed the relation
between these Hecke eigenvalues and the $L$-factor
of $\pi$.
We therefore obtain
an explicit formula 
for $W$ on $T_1$
in terms of 
the $L$-factor of $\pi$ (Theorem~\ref{thm:main}).

As a corollary,
we show that 
Whittaker functions associated to newforms attain 
$L$-factors when they are integrated on $\mathrm{GL}_1(F)$
which is 
embedded into the upper left side in $\mathrm{GL}_n(F)$ (Theorem~\ref{thm:zl}).

We note that 
our formula determines Whittaker functions 
associated to newforms 
for $\pi$
on $BK_{c(\pi)}$, where $B$ denotes the upper triangular Borel 
subgroup of $\mathrm{GL}_n(F)$.
But the set $BK_{c(\pi)}$ is smaller than $\mathrm{GL}_n(F)$ 
when the conductor of $\pi$ is positive.
However
it seems that 
our formula is enough to compute several kinds of
zeta integrals when all data in them are related to 
newforms.

Recently, Matringe \cite{Matringe} gave a constructive proof of the existence
of newforms for generic representations $\pi$ of $\mathrm{GL}_n(F)$,
by investigating derivatives of $\pi$.
One can find
the same formula for Whittaker functions associated to newforms in {\it loc. cit}.

\medskip
\noindent
{\bf Acknowledgements} \
The author would like to thank Satoshi Kondo,
Takuya Yamauchi and Seidai Yasuda
for helpful discussions and comments.
He also thanks the referee for useful suggestions. 

\Section{Local newforms}\label{sec:newform}
In this section,
we recall  from \cite{JPSS} the notion  and basic properties 
of local newforms for $\mathrm{GL}(n)$.
Let 
$F$ be a non-archimedean local field of characteristic zero,
$\ri$ its ring of integers,
$\mi$ the maximal ideal in $\ri$,
and
$\p$ a generator of $\mi$.
We write $|\cdot|$  for the absolute value of $F$
normalized so that $|\p| = q^{-1}$,
where 
$q$ denotes the cardinality of the residue field $\ri/\mi$.
We fix a non-trivial additive character $\psi$ of $F$
whose conductor is $\ri$.

We 
set
$G = \mathrm{GL}_n(F)$.
Let
$B$ denote the Borel subgroup of $G$ consisting of the upper triangular 
matrices,
and $U$ its unipotent radical.
We use the same letter 
$\psi$ for the following character of $U$ induced from $\psi$:
\[
\psi(u) = \psi(\sum_{i=1}^{n-1}u_{i,i+1}),\
\mathrm{for}\ u = (u_{i,j}) \in U.
\]
Let  $(\pi, V)$ be
an irreducible generic representation of $G$.
Then there exists a unique element $l$ in $\mathrm{Hom}_U(\pi, \psi)$ up to constant.
For $v \in V$,
we define the Whittaker function  associated to $v$
by
\[
W_v(g) = l(\pi(g)v),\ g \in G.
\]
We call 
$\mathcal{W}(\pi, \psi) = \{ W_v\, |\, v \in V\}$ 
{\it the  Whittaker model of $\pi$}
with respect to $\psi$.

Put $K_0 = \mathrm{GL}_n(\ri)$.
For each positive integer $m$,
let $K_m$ be the subgroup of $K_0$
consisting of the elements $k = (k_{ij})$ in $K_0$
which satisfy
\[
(k_{n1}, k_{n2}, \ldots, k_{nn}) \equiv (0, 0, \ldots, 0, 1) \pmod{\mi^m}.
\]
We write $V(m)$ for the space of $K_m$-fixed vectors in $V$.
Due to \cite{JPSS} (5.1) Th\'{e}or\`{e}me (ii),
there exists a non-negative integer 
$m$ such that
$V(m)\neq \{0\}$.
We denote by $c(\pi)$ the smallest integer with this property.
We 
call $c(\pi)$
{\it the conductor of $\pi$},
and elements in $V(c(\pi))$ {\it newforms for $\pi$}.
By \cite{JPSS} (5.1) Th\'{e}or\`{e}me (ii),
we have
\begin{eqnarray}\label{eq:mult_one}
\dim V(c(\pi)) = 1.
\end{eqnarray}

For simplicity,
we say that an element $W$ in $\mathcal{W}(\pi, \psi)$
is a newform 
if $W$ is the Whittaker function associated to a newform for $\pi$.
Suppose that
$W$ is a non-zero newform in $\mathcal{W}(\pi, \psi)$.
Then by the existence of Kirillov model for $\pi$
(see \cite{BZ} Theorem 5.20),
there exists an element $g \in \mathrm{GL}_{n-1}(F)$
such that
\[
W\left(
\begin{array}{cc}
g & \\
& 1
\end{array}
\right) \neq 0.
\]
For any element
$f = (f_1, \ldots, f_{n-1})$ in $\Z^{n-1}$,
we set
\[
\p^f
= \mathrm{diag} (\p^{f_1}, \ldots, \p^{f_{n-1}}, 1)
\in G.
\]
By using the Iwasawa decomposition of 
$\mathrm{GL}_{n-1}(F)$,
we see that there exists $f \in \Z^{n-1}$
such that
$W(\p^f) \neq 0$.
\begin{prop}\label{prop:ij}
Let
$W$ be a newform in $\mathcal{W}(\pi, \psi)$.
For $f \in \Z^{n-1}$,
we have
$W(\p^f) = 0$  unless
$f_1 \geq \ldots \geq f_{n-1} \geq 0$.
\end{prop}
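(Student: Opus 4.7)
The strategy is to combine the two equivariance properties of $W$: on the one hand, since $W$ corresponds to a newform, $W(gk) = W(g)$ for every $k \in K_{c(\pi)}$; on the other hand, $W(ug) = \psi(u) W(g)$ for $u \in U$. Conjugating a well-chosen unipotent element $u$ past $\p^f$ produces another unipotent element whose $\psi$-value depends on $f$, and if $W(\p^f) \neq 0$ this forces the exponent of $\p$ appearing in the argument of $\psi$ to be non-negative.

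Concretely, for each $i = 1, \dots, n-1$ and each $t \in \ri$, I would consider the elementary unipotent matrix $u_i(t) = I_n + t E_{i,i+1}$, where $E_{i,i+1}$ is the matrix unit. For $i \leq n-2$ the last row of $u_i(t)$ is $(0,\ldots,0,1)$, and for $i = n-1$ the last row is unchanged as well; combined with $t \in \ri$, this shows $u_i(t) \in K_m$ for every $m \geq 0$, in particular $u_i(t) \in K_{c(\pi)}$. A direct computation gives
\[
\p^f u_i(t) \p^{-f} = I_n + t\,\p^{f_i - f_{i+1}} E_{i,i+1},
\]
with the convention $f_n = 0$. Applying left $(U,\psi)$-equivariance and right $K_{c(\pi)}$-invariance,
\[
W(\p^f) = W(\p^f u_i(t)) = W\bigl((\p^f u_i(t) \p^{-f})\,\p^f\bigr) = \psi\bigl(t\,\p^{f_i - f_{i+1}}\bigr) W(\p^f).
\]

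Assuming $W(\p^f) \neq 0$, the character $t \mapsto \psi(t\,\p^{f_i - f_{i+1}})$ must be trivial on $\ri$. Since the conductor of $\psi$ is exactly $\ri$, this is equivalent to $\p^{f_i - f_{i+1}} \ri \subseteq \ri$, i.e.\ $f_i \geq f_{i+1}$. Running $i$ from $1$ to $n-1$ yields $f_1 \geq f_2 \geq \cdots \geq f_{n-1} \geq f_n = 0$, which is the desired inequality.

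The step that requires any thought is the case $i = n-1$, which gives the inequality $f_{n-1} \geq 0$; here one needs to verify that $u_{n-1}(t)$ still lies in $K_m$ for every $m$, but this is automatic because its last row is $(0,\ldots,0,1)$. No other step presents an obstacle: everything follows from the two equivariance properties and the normalisation of $\psi$.
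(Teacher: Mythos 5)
Your argument is correct and is exactly what the paper's one-line proof ("$W$ is $U\cap M_n(\ri)$-invariant") is alluding to: the elementary unipotents $u_i(t)$ with $t\in\ri$ lie in $U\cap M_n(\ri)\subseteq K_{c(\pi)}$, and conjugating them across $\p^f$ and comparing the two equivariance properties forces $f_i\geq f_{i+1}$ (with $f_n=0$). You have simply spelled out the details the paper leaves implicit; there is no divergence in method.
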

\begin{proof}
The proposition follows since
$W$ is $U\cap M_n(\ri)$-invariant.
\end{proof}

\Section{Hecke operators}\label{sec:hecke}
Let 
$(\pi, V)$ be an irreducible generic representation of $G$
with conductor $c = c(\pi)$.
Throughout this section,
we suppose that $c$ is positive.
For each 
$g \in G$,
we define the
Hecke operator $T_g$ on $V(c)$
by
\begin{eqnarray*}
T_g v & = & 
\frac{1}{\mathrm{vol}(K_c)} \int_{K_cgK_c} \pi(k) v dk
 = 
\sum_{k \in K_c/K_c\cap gK_cg^{-1}}\pi(kg)v,
\end{eqnarray*}
for $v \in V(c)$.
By (\ref{eq:mult_one}),
there exists a complex number $\lambda_g$
such that
$T_g = \lambda_g 1_{V(c)}$.
We call
$\lambda_g$ the Hecke eigenvalue of $T_g$.

For
$1 \leq i \leq n-1$,
we denote by $\lambda_i$ the Hecke eigenvalue
of $T_i = T_{\p^{f^i}}$,
where
\[
f^i = (\overbrace{1,\ldots, 1}^i, 0, \ldots, 0) \in \Z^{n-1}.
\]
To
describe $T_i$,
we give
a complete system of representatives 
for $K_m/K_m\cap \p^{f^i} K_m \p^{-f^i}$
($m > 0$, $1 \leq i \leq n-1$).
We write $A \in M_n(F)$
as
\[
A = 
\left(
\begin{array}{cc}
A_{11} & A_{12}\\
A_{21} & A_{22}
\end{array}
\right),
\]
where $A_{11} \in M_{n-1}(F)$,
$A_{12} \in M_{n-1, 1}(F)$, 
$A_{21} \in M_{1, n-1}(F)$
and $A_{22} \in F$.
We embed
the group $\mathrm{GL}_{n-1}(F)$ into $G$
by 
\[
g \mapsto 
\left(
\begin{array}{cc}
g & \\
& 1
\end{array}
\right),\ g \in \mathrm{GL}_{n-1}(F).
\]
Then
we may regard $\p^f$,
$f \in \Z^{n-1}$
as an element in $\mathrm{GL}_{n-1}(F)$.
Set $H = \mathrm{GL}_{n-1}(\ri)$.
For $0 \leq i \leq n-1$,
we define an $\ri$-lattice $L_i$ in $M_{n-1, 1}(F)$
by
\[
L_i = {}^t (\overbrace{\mi\oplus \cdots \oplus \mi}^i
\oplus \ri \oplus \cdots \oplus \ri).
\]
Then the groups $H$ and
$H\cap \p^{f^i} H \p^{-f^i}$
fix
$L_0$ and $L_i$ respectively.
\begin{lem}\label{lem:coset}
Let $m$ be a positive integer.
For $1 \leq i \leq n-1$,
we can take 
\[
\left(
\begin{array}{cc}
a & x\\
0 & 1
\end{array}
\right),\ a \in H/H\cap \p^{f^i} H \p^{-f^i},\ x \in L_0/aL_i
\]
as a complete system of representatives 
for $K_m/K_m\cap \p^{f^i} K_m \p^{-f^i}$.
\end{lem}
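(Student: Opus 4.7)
My plan is to reduce the statement to a block-matrix bookkeeping calculation, using the decomposition $k = \begin{pmatrix} A_{11} & A_{12} \\ A_{21} & A_{22} \end{pmatrix}$ already fixed in the paper. Write $D = \p^{f^i} \in H$, so that $\p^{f^i}$ (embedded in $G$) equals $\mathrm{diag}(D,1)$. A direct conjugation gives
\[
\p^{-f^i} k \p^{f^i} = \begin{pmatrix} D^{-1}A_{11}D & D^{-1}A_{12} \\ A_{21}D & A_{22} \end{pmatrix}.
\]
The first step of the proof is to describe $K_m \cap \p^{f^i} K_m \p^{-f^i}$ explicitly: combining the $K_m$-conditions for $k$ and for $\p^{-f^i}k\p^{f^i}$, and noting that $A_{21} \equiv 0 \pmod{\mi^m}$ already implies $A_{21}D \equiv 0 \pmod{\mi^m}$, one sees that an element of $K_m$ lies in the intersection if and only if $A_{11} \in H_i := H \cap DHD^{-1}$, $A_{12} \in L_i$, $A_{21} \equiv 0 \pmod{\mi^m}$ and $A_{22} \equiv 1 \pmod{\mi^m}$ (the determinant is automatically a unit because $A_{22}$ is a unit and $A_{21} \equiv 0$).

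Second, I would show every $k \in K_m$ is left-equivalent (under right multiplication by the intersection) to some $r(a,x) := \begin{pmatrix} a & x \\ 0 & 1 \end{pmatrix}$ with $a \in H$, $x \in L_0$. The element $h_1 = \begin{pmatrix} I & 0 \\ -A_{22}^{-1}A_{21} & A_{22}^{-1} \end{pmatrix}$ lies in $K_m \cap \p^{f^i} K_m \p^{-f^i}$ (its upper-left block is $I \in H_i$, its upper-right block is $0 \in L_i$, and its bottom row satisfies the required congruence), and $k h_1 = \begin{pmatrix} A_{11} - A_{12}A_{22}^{-1}A_{21} & A_{12}A_{22}^{-1} \\ 0 & 1 \end{pmatrix}$. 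The upper-left block lies in $H$ and the upper-right block in $L_0$, giving $r(a',x')$. Next, right multiplication by $\begin{pmatrix} b & 0 \\ 0 & 1\end{pmatrix}$ for $b \in H_i$ sends $r(a',x')$ to $r(a'b, x')$, so I can choose $a$ to be any prescribed representative of the coset $a'H_i$. Finally, right multiplication by $\begin{pmatrix} I & y \\ 0 & 1 \end{pmatrix}$ for $y \in L_i$ sends $r(a,x')$ to $r(a, x' + ay)$, so $x$ can be brought to any prescribed representative in $L_0/aL_i$.

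Third, I would verify that distinct pairs $(a,x)$ produce distinct cosets. If $r(a,x)$ and $r(a',x')$ lie in the same coset, then $r(a',x')^{-1} r(a,x) = \begin{pmatrix} (a')^{-1}a & (a')^{-1}(x-x') \\ 0 & 1 \end{pmatrix}$ must lie in the intersection, forcing $(a')^{-1}a \in H_i$ and $(a')^{-1}(x-x') \in L_i$, i.e.\ $a = a'$ as cosets in $H/H_i$ and $x = x'$ as cosets in $L_0/aL_i$.

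The only non-formal check is that $L_0/aL_i$ is well-defined independently of the representative $a \in H/H_i$, which reduces to $b L_i = L_i$ for $b \in H_i$. Writing $b$ in $(i, n{-}1{-}i)$ block form, the defining condition $D^{-1}bD \in M_{n-1}(\ri)$ forces the upper-right block of $b$ to lie in $\mi$, and a direct multiplication then shows $b$ stabilizes $L_i = {}^t(\mi^i \oplus \ri^{n-1-i})$. This small lemma is the only potential pitfall; everything else is routine bookkeeping in the block decomposition.
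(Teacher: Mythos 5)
Your proof is correct and follows the same block-matrix approach as the paper's own argument: characterize the intersection $K_m \cap \p^{f^i}K_m\p^{-f^i}$ via a conjugation computation, reduce an arbitrary element of $K_m$ to the form $\left(\begin{smallmatrix} a & x \\ 0 & 1 \end{smallmatrix}\right)$ by right multiplication, then check disjointness of cosets. You are more explicit than the paper on two points it leaves implicit --- the precise element $h_1$ used to kill the bottom row, and the well-definedness of $L_0/aL_i$ under changing the representative $a$ of its coset in $H/H_i$ --- both of which are welcome clarifications rather than deviations.
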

\begin{proof}
We use the block notation.
An element $g=\left(
\begin{array}{cc}
a & b\\
c & d
\end{array}
\right) \in K_m$ lies in 
$K_m \cap \p^{f^i}K_m\p^{-f^i}$
if and only if 
$a \in H\cap \p^{f^i}H\p^{-f^i}$
and $b \in L_i$.
Thus, one can observe that the elements in the lemma
belong to pairwise disjoint cosets in 
$K_m/K_m\cap \p^{f^i} K_m \p^{-f^i}$.
For 
$g=\left(
\begin{array}{cc}
a & b\\
c & d
\end{array}
\right) \in K_m$,
we see that $g$ is equivalent
to 
$\left(
\begin{array}{cc}
a & b\\
0 & 1
\end{array}
\right)$ modulo $K_m\cap \p^{f^i} K_m \p^{-f^i}$.
This completes  the proof of the lemma.
\end{proof}

Let $W$ be a newform in $\mathcal{W}(\pi, \psi)$.
Set $w(f) = W(\p^f)$, for $f \in \Z^{n-1}$.
Then we obtain the following lemma:
\begin{lem}\label{lem:diff}
Suppose that 
$f \in \Z^{n-1}$ satisfies 
$f_1 \geq \ldots \geq f_{n-1} \geq 0$.
Then we have
\begin{eqnarray}
q^{-i} \lambda_i w(f)
=
q^{i(n-1)-i(i-1)/2}
\sum_{\e \in I_i} q^{ -\sum_{j=1}^{n-1} \e_j j} w(f+\e),
\end{eqnarray}
where 
$I_i=
\{ \e \in \Z^{n-1}\ |\ \e_j \in \{0, 1\},\
\sum_{j = 1}^{n-1} \e_j = i\}$.
\end{lem}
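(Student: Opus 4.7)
The plan follows Shintani's general strategy of exploiting the Hecke eigenvalue relation $T_i v=\lambda_i v$ inside the Whittaker model. Evaluating at $\p^f$ and using Lemma~\ref{lem:coset} gives
\[
\lambda_i w(f) = \sum_{a}\sum_{x} W\!\left(\p^f\begin{pmatrix}a & x\\ 0 & 1\end{pmatrix}\p^{f^i}\right),
\]
with $a$ ranging over $H/(H\cap\p^{f^i}H\p^{-f^i})$ and $x$ over $L_0/aL_i$. Writing $t=\mathrm{diag}(\p^{f_1},\ldots,\p^{f_{n-1}})\in\mathrm{GL}_{n-1}(F)$, I would apply the block identity
\[
\p^f\begin{pmatrix}a & x\\ 0 & 1\end{pmatrix}\p^{f^i} = \begin{pmatrix}1_{n-1} & tx\\ 0 & 1\end{pmatrix}\begin{pmatrix}t\,a\,\p^{f^i} & 0\\ 0 & 1\end{pmatrix}
\]
to separate the $x$- and $a$-contributions (with $\p^{f^i}$ on the right regarded as the corresponding element of $\mathrm{GL}_{n-1}(F)$).

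Next, I would parametrize the cosets via the Bruhat--Schubert decomposition of the Grassmannian $H/(H\cap \p^{f^i}H\p^{-f^i})$. For each $\e\in I_i$, let $w_\e\in S_{n-1}\subset H$ be the minimal-length permutation sending $\{1,\ldots,i\}$ to $S_\e:=\{j:\e_j=1\}$ in increasing order, and let $N_\e$ be the upper unipotent subgroup of $H$ whose only possibly nonzero off-diagonal entries are at positions $(j,k)$ with $j\in S_\e$, $k\in S_\e^c$, and $j<k$. The collection $\{n_\e w_\e : \e\in I_i,\ n_\e\in N_\e(\rf)\}$ is then a complete system of coset representatives. For any such $a=n_\e w_\e$,
\[
a\,\p^{f^i} = n_\e\,w_\e\,\p^{f^i} = n_\e\,\p^\e\,w_\e, \qquad \p^\e := \mathrm{diag}(\p^{\e_1},\ldots,\p^{\e_{n-1}}),
\]
is already the Iwasawa decomposition in $\mathrm{GL}_{n-1}(F)$: $n_\e\in U_{n-1}(\ri)$, $\p^\e$ diagonal, $w_\e\in H$. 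Since the embedded $w_\e$ lies in $K_c$, the left $(U,\psi)$-equivariance and right $K_c$-invariance of $W$ yield
\[
W\!\left(\p^f\begin{pmatrix}a & x\\ 0 & 1\end{pmatrix}\p^{f^i}\right) = \psi(\p^{f_{n-1}}x_{n-1})\cdot\psi\!\left(\sum_{j=1}^{n-2}(n_\e)_{j,j+1}\p^{f_j-f_{j+1}}\right)\cdot w(f+\e).
\]

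Under the hypothesis $f_1\geq\cdots\geq f_{n-1}\geq 0$, each $\p^{f_{n-1}}$ and each $\p^{f_j-f_{j+1}}$ lies in $\ri$, so both additive characters above are trivial on their integral arguments. Hence the $x$-sum contributes $|L_0/aL_i|=q^i$ and the sum over $n_\e$ contributes $|N_\e(\rf)|=q^{d_\e}$, where
\[
d_\e = \bigl|\{(j,k) : j\in S_\e,\ k\in S_\e^c,\ j<k\}\bigr| = i(n-1)-\tfrac{i(i-1)}{2} - \sum_{j=1}^{n-1}j\e_j.
\]
Combining and dividing both sides by $q^i$ yields the identity of the lemma. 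The principal technical obstacle is twofold: justifying the Schubert-type decomposition (i.e., that $\{n_\e w_\e\}$ forms a complete system of representatives) and verifying the combinatorial identity for $d_\e$.
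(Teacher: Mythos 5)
Your proposal is correct and follows essentially the same strategy as the paper's proof: starting from the coset decomposition of Lemma~\ref{lem:coset}, separating the unipotent ($x$) contribution which yields the factor $q^i$, and then reducing the remaining sum over $a$ to a Schubert-cell count over $H/(H\cap\p^{f^i}H\p^{-f^i})$. The paper handles the $a$-sum by citing Shintani directly, whereas you make the Bruhat--Schubert decomposition $\{n_\e w_\e\}$ and the cell-size count $q^{d_\e}$ explicit; both are standard and your combinatorial identity $d_\e = i(n-1)-i(i-1)/2-\sum_j j\e_j$ does check out, so the two ``obstacles'' you flag are genuine steps but not gaps in the approach.
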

\begin{proof}
By Lemma~\ref{lem:coset},
we get
\begin{eqnarray*}
\lambda_i W(\p^f) & = & 
\sum_{k \in K_c/K_c\cap \p^{f^i}K_c\p^{-f^i}}W(\p^f k \p^{f^i})\\
& = & 
\sum_{a \in H/H\cap \p^{f^i} H \p^{-f^i},\ x \in L_0/aL_i}W\left(\p^f
\left(
\begin{array}{cc}
1 & x\\
0 & 1
\end{array}
\right)
\left(
\begin{array}{cc}
a & 0\\
0 & 1
\end{array}
\right)\p^{f^i}
\right)\\
& = & 
q^i
\sum_{a \in H/H\cap \p^{f^i} H \p^{-f^i}}W\left(\p^f
\left(
\begin{array}{cc}
a & 0\\
0 & 1
\end{array}
\right)\p^{f^i}
\right).
\end{eqnarray*}
In the last equality,
we use the equation
$[L_0: aL_i] = [aL_0:aL_i] = [L_0:L_i] = q^i$.
Now the proof is quite similar to that
of the theorem in \cite{Shintani} p. 181 
because $W|_{\mathrm{GL}_{n-1}(F)}$
is $\mathrm{GL}_{n-1}(\ri)$-invariant.
We note that 
in \cite{Shintani}
one should take the set $I_i$ as
$I_i=
\{ \e \in \Z^{n}\ |\ \e_j \in \{0, 1\},\
\sum_{j = 1}^{n} \e_j = i\}$.
\end{proof}

\Section{An explicit formula for Whittaker functions}
We prepare some notation to state our main theorem.
For an 
irreducible generic representation $\pi$ of $G$,
let $L(s, \pi)$ denote its $L$-factor 
defined in \cite{GJ}.
It follows from \cite{Jacquet2} section 3
that
the degree of $L(s, \pi)$ is
equal to or less than $n$.
So we can write it as
\[
L(s, \pi) =\prod_{i = 1}^n (1-\alpha_i q^{-s})^{-1},\
\alpha_i \in \C.
\]
By \cite{Jacquet2} section 3 again,
$L(s, \pi)$ is of degree $n$
if and only if $\pi$ is unramified,
that is, $c(\pi)$ equals to zero.
We take $\alpha_n$ to be zero 
if the degree of $L(s, \pi)$ is less than $n$.
Let
$X = (X_1, \ldots, X_{n})$ be $n$ indeterminates.
If $f \in \Z^{n}$ 
satisfies $f_1 \geq \ldots \geq f_n \geq 0$,
then
we denote by $s_f(X)$ the Schur polynomial 
in $X_1, \ldots, X_{n}$
associated to $f$,
that is,
\begin{eqnarray*}
s_f(X) = \frac{|(X_j^{f_i+n-i})_{1 \leq i, j\leq n}|}{\prod_{1 \leq i < j \leq n}(X_i-X_j)}
\end{eqnarray*}
(see \cite{Mac} Chapter I, section 3).
Since $s_f(X)$ is a symmetric polynomial
in $X_1, \ldots, X_n$,
the number $s_f(\alpha) = s_f(\alpha_1, \ldots, \alpha_n)$
is well-defined.
We identify $(f_1, \ldots, f_{n-1}) \in \Z^{n-1}$
with $(f_1, \ldots, f_{n-1}, 0) \in \Z^n$.
We note that if the conductor of $\pi$ is positive,
then 
we have
$s_f(\alpha) = s_{(f_1, \ldots, f_{n-1}, 0)}(\alpha_1, \ldots, \alpha_{n-1}, 0)=
s_{(f_1, \ldots, f_{n-1})}(\alpha_1, \ldots, \alpha_{n-1})$,
for $f \in \Z^{n-1}$ such that 
$f_1 \geq \ldots \geq f_{n-1}\geq 0$.

We denote by $\delta_B$ the modulus character of $B$.
We have
$\delta_B(\p^f) = q^{-\sum_{j=1}^{n-1}({n+1}-2j)f_j}$,
for $f \in \Z^{n-1}$.
\begin{thm}\label{thm:main}
Let $\pi$ be an irreducible generic representation of $G$
and $W$ its newform in $\mathcal{W}(\pi, \psi)$.
For $f \in \Z^{n-1}$,
we have
\begin{eqnarray*}
W(\p^f)
= 
\left\{
\begin{array}{cl}
\delta_B^{1/2}(\p^f) s_f(\alpha)W(1), & \mathrm{if}\ 
f_1 \geq \ldots \geq f_{n-1} \geq 0;\\
0, & \mathrm{otherwise}.
\end{array}
\right.
\end{eqnarray*}
\end{thm}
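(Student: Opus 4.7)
The non-dominant case is handled by Proposition~\ref{prop:ij}: when some $f_i < f_{i+1}$ or $f_{n-1} < 0$, we have $w(f)=0$, and under the paper's conventions $s_f(\alpha)=0$ as well. The unramified case $c(\pi)=0$ is Shintani's classical formula, so I assume throughout that $c=c(\pi)\geq 1$ and $f_1 \geq \cdots \geq f_{n-1} \geq 0$, and write $\alpha = (\alpha_1,\ldots,\alpha_{n-1})$; by the paper's convention $s_f(\alpha)$ is then the Schur polynomial in $n-1$ variables.

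Following Shintani, I normalize $\tilde{w}(f) = \delta_B(\p^f)^{-1/2}\, w(f)/W(1)$, extended by zero to non-partitions; the goal becomes $\tilde{w}(f) = s_f(\alpha)$. Substituting $w(f) = W(1)\,\delta_B^{1/2}(\p^f)\,\tilde{w}(f)$ into Lemma~\ref{lem:diff} and using
\[
\frac{\delta_B^{1/2}(\p^{f+\e})}{\delta_B^{1/2}(\p^f)} \;=\; q^{-\tfrac{1}{2}\sum_{j=1}^{n-1}(n+1-2j)\e_j},
\]
the $\e$-dependent exponent on the right simplifies to
\[
-\sum_{j=1}^{n-1} j\, \e_j \;-\; \tfrac{1}{2}\sum_{j=1}^{n-1}(n+1-2j)\e_j \;=\; -\tfrac{n+1}{2}\sum_{j=1}^{n-1}\e_j \;=\; -\tfrac{(n+1)i}{2},
\]
which is independent of $\e$ and factors out of the sum. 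Collecting $q$-powers yields the clean recursion
\[
\lambda_i\, \tilde{w}(f) \;=\; q^{i(n-i)/2} \sum_{\e \in I_i} \tilde{w}(f+\e), \qquad 1 \leq i \leq n-1.
\]

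Next I invoke the Kondo--Yasuda identity \cite{K-S}, whose content in this normalization is $\lambda_i = q^{i(n-i)/2}\, e_i(\alpha)$, where $e_i$ is the $i$-th elementary symmetric polynomial. Substituting cancels the $q^{i(n-i)/2}$ on both sides and leaves
\[
e_i(\alpha)\,\tilde{w}(f) \;=\; \sum_{\e \in I_i} \tilde{w}(f+\e),\qquad 1 \leq i \leq n-1,
\]
where the $\e$ producing non-partitions $f+\e$ contribute zero by construction of $\tilde{w}$. This is exactly Pieri's rule for Schur polynomials in $n-1$ variables, so $f \mapsto s_f(\alpha)$ satisfies the same system with matching initial value $s_{(0,\ldots,0)}(\alpha) = 1 = \tilde{w}(0,\ldots,0)$.

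To conclude, I need uniqueness: the $i$-th recursion applied at $f=(0,\ldots,0)$ gives immediately $\tilde{w}(1^i, 0^{n-1-i}) = e_i(\alpha)$ for all $1 \leq i \leq n-1$, determining the ``column'' values. An induction on $|f|=\sum_j f_j$ using the $i=1$ recursion combined with the vanishing of $\tilde{w}$ on non-partitions then determines $\tilde{w}(f)$ for all partitions in terms of previously computed values, reproducing the Jacobi--Trudi expansion of $s_f(\alpha)$. The main technical obstacle I anticipate is verifying the precise Kondo--Yasuda normalization $\lambda_i = q^{i(n-i)/2}\, e_i(\alpha)$, with the correct $q$-power that makes Pieri's rule emerge \emph{without} residual factors; once that is in place, the combinatorial uniqueness proceeds exactly as in Shintani's argument in \cite{Shintani}.
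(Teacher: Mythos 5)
Your proof is correct and takes essentially the same route as the paper: normalize $W(\p^f)$ so the Hecke relation of Lemma~\ref{lem:diff} becomes a Pieri-type recursion, feed in the Kondo--Yasuda identity, and invoke Shintani's uniqueness argument. Your choice $\tilde{w}=\delta_B^{-1/2}(\p^f)\,w(f)/W(1)$ and the reformulation $\lambda_i=q^{i(n-i)/2}e_i(\alpha)$ are slightly tidier than the paper's (which works with $q^{\sum_j(n-1-j)f_j}W(\p^f)$, introduces auxiliary parameters $\mu_i$ with $e_i(\mu)=q^{i(i-1)/2-i}\lambda_i$, and converts to $\alpha_i$ at the very end), but the content and the chain of ingredients are the same. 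One small caution about your closing uniqueness sketch: the $i=1$ recursion together with the column values $\tilde{w}(1^i,0^{n-1-i})=e_i(\alpha)$ does \emph{not} by itself pin down $\tilde{w}$. Already for $n=3$, the $i=1$ recursion applied at $(2,1)$ only yields $e_1\tilde{w}(2,1)=\tilde{w}(3,1)+\tilde{w}(2,2)$, and one needs the $i=2$ recursion at $(1,1)$ to isolate $\tilde{w}(2,2)$; so the induction genuinely requires all $n-1$ recursions, as in Shintani's argument to which both you and the paper ultimately appeal. (Also, $s_f(\alpha)$ is simply undefined for non-partitions in the paper's conventions rather than equal to $0$; the $W(\p^f)=0$ case is handled directly by Proposition~\ref{prop:ij}.)
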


\begin{proof}
If $c(\pi) = 0$,
then the theorem follows from \cite{Shintani}.
So we may assume that $\pi$ has positive conductor.
For $f \in \Z^{n-1}$,
we set
\begin{eqnarray*}
\widetilde{w}(f)
=
q^{\sum_{j = 1}^{n-1}(n-1-j)f_j} W(\p^f).
\end{eqnarray*}
By Proposition~\ref{prop:ij} and Lemma~\ref{lem:diff},
the function
$\widetilde{w}$ on $\Z^{n-1}$
satisfies the following system of difference equations:
\begin{eqnarray}\label{eq:diff_eq}
\left\{
\begin{array}{cl}
q^{i(i-1)/2-i}\lambda_i \widetilde{w}(f)
=
\sum_{\e \in I_i} \widetilde{w}(f+\e), & \mathrm{if}\
f_1 \geq \ldots \geq f_{n-1} \geq 0;\\
\widetilde{w}(f) = 0, & 
\mathrm{otherwise}.
\end{array}
\right.
\end{eqnarray}
As in \cite{Shintani} p. 182,
the solution of the above difference equations
is unique
and given by
\begin{eqnarray*}
\widetilde{w}(f)
= 
\left\{
\begin{array}{cl}
s_f(\mu)W(1), & \mathrm{if}\ 
f_1 \geq \ldots \geq f_{n-1} \geq 0;\\
0, & \mathrm{otherwise},
\end{array}
\right.
\end{eqnarray*}
where 
$\mu_1, \ldots, \mu_{n-1}$ are complex numbers whose
$i$-th elementary symmetric polynomial 
equals to 
$q^{i(i-1)/2-i}\lambda_i$, for $1 \leq i \leq n-1$,
and $\mu_n = 0$.

By \cite{K-S} Theorem 4.2,
we have
\[
L(s, \pi)
= \left(
\sum_{i = 0}^{n-1} (-1)^i \lambda_i q^{\frac{i(i-1)}{2}-i(\frac{n-1}{2}+s)}
\right)^{-1}.
\]
Hence we may assume $\mu_i =q^{(n-1)/2-1}\alpha_i$,
for $1 \leq i \leq n$.
Thus, if $f \in \Z^{n-1}$ satisfies
$f_1 \geq \ldots \geq f_{n-1} \geq 0$,
then
we obtain
\begin{eqnarray*}
W(\p^f) & = & 
q^{-\sum_{j = 1}^{n-1}(n-1-j)f_j} 
s_f(\mu)W(1)
= 
q^{-\sum_{j = 1}^{n-1}(n-1-j)f_j +(\frac{n-1}{2}-1)\sum_{j=1}^{n-1}f_j} 
s_f(\alpha)W(1)\\
&= & 
q^{-\sum_{j=1}^{n-1}(\frac{n+1}{2}-j)f_j} 
s_f(\alpha)W(1)
=
\delta_B^{1/2}(\p^f) s_f(\alpha)W(1).
\end{eqnarray*}
This completes the proof.
\end{proof}

\begin{rem}
Set $D_1 = \{ \p^f\, |\, f \in \Z^{n-1}\}$.
Since the center $Z$ of $G$ acts on $\mathcal{W}(\pi, \psi)$
by the central character of $\pi$,
Theorem~\ref{thm:main} gives an explicit formula
for newforms in $\mathcal{W}(\pi, \psi)$ on $BK_{c(\pi)} = UZD_1 K_{c(\pi)}$.
\end{rem}

\begin{cor}
Let $\pi$ be an irreducible generic representation of $G$.
Then we have $W(1) \neq 0$
 for all non-zero newforms $W$ in $\mathcal{W}(\pi, \psi)$.
\end{cor}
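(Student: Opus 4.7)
The plan is to read the corollary off directly from Theorem~\ref{thm:main} combined with the existence argument already presented in section~\ref{sec:newform}. That theorem expresses the value of $W$ on every $\p^f$ as an explicit scalar multiple of $W(1)$: concretely, $W(\p^f) = \delta_B^{1/2}(\p^f) s_f(\alpha) W(1)$ on the dominant cone and zero elsewhere. Hence, were $W(1) = 0$, the function $W$ would vanish identically on the diagonal set $D_1 = \{\p^f : f \in \Z^{n-1}\}$.

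The second ingredient is that a non-zero newform cannot vanish on all of $D_1$. This is precisely what the paragraph preceding Proposition~\ref{prop:ij} establishes: the existence of Kirillov models (\cite{BZ} Theorem 5.20) supplies some $g \in \mathrm{GL}_{n-1}(F)$ with $W(\mathrm{diag}(g, 1)) \neq 0$, and then the Iwasawa decomposition of $\mathrm{GL}_{n-1}(F)$, combined with the $\psi$-equivariance of $W$ under $U$ and the invariance of $W$ under the block-embedded copy of $\mathrm{GL}_{n-1}(\ri) \subset K_{c(\pi)}$, produces some $f \in \Z^{n-1}$ with $W(\p^f) \neq 0$. By Proposition~\ref{prop:ij} this $f$ is automatically dominant, and then applying Theorem~\ref{thm:main} to this $f$ yields $0 \neq W(\p^f) = \delta_B^{1/2}(\p^f) s_f(\alpha) W(1)$, which forces $W(1) \neq 0$.

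There is essentially no obstacle: the corollary is an immediate bookkeeping consequence of the main theorem. The only minor point worth verifying is that the existence argument for a non-vanishing $W(\p^f)$ applies uniformly whether $c(\pi) = 0$ or $c(\pi) > 0$; this is indeed the case, since both the Kirillov-model input and the $\mathrm{GL}_{n-1}(\ri)$-invariance of $W|_{\mathrm{GL}_{n-1}(F)}$ hold in either regime.
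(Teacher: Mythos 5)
Your proof is correct and is exactly the paper's argument: Theorem~\ref{thm:main} shows $W(1)=0$ would force $W$ to vanish on all of $D_1$, contradicting the non-vanishing of some $W(\p^f)$ established via the Kirillov model and Iwasawa decomposition in the paragraph before Proposition~\ref{prop:ij}. You have simply unpacked the paper's one-line reference to that remark.
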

\begin{proof}
By Theorem~\ref{thm:main},
$W(1) = 0$ implies 
$W(\p^f) = 0$ for all $f \in \Z^{n-1}$.
This contradicts the remark before
Proposition~\ref{prop:ij}.
\end{proof}

\Section{An application to zeta integral}
In this section,
we give an integral representation of $L$-factors
by using our formula for Whittaker functions associated to 
newforms.
Let 
$(\pi, V)$ be an irreducible generic representation of $G$.
For 
$W\in \mathcal{W}(\pi, \psi)$,
we set 
\[
Z(s, W)
=
\int_{F^\times}
W(t(a))
|a|^{s-\frac{n-1}{2}}
d^\times a,\
s \in \C,
\]
where 
$t(a) = \mathrm{diag}(a, 1, \ldots, 1)$, for $a \in F^\times$.
Here we normalize Haar measure $d^\times a$ on $F^\times$
so that
$\int_{\ri^\times} d^\times a = 1$.
The integral $Z(s, W)$
absolutely converges to a rational function in $q^{-s}$
when the real part of $s$
is sufficiently large.
By \cite{Rankin} Theorem 2.7 (ii),
the set
$\{ Z(s, W)\, |\, W \in \mathcal{W}(\pi, \psi)\}$ 
coincides with the fractional 
ideal of $\C[q^{-s}, q^s]$ 
generated by $L(s, \pi)$.
We shall show that 
$Z(s, W)$ attains $L(s, \pi)$ when $W$ is a newform.
\begin{thm}\label{thm:zl}
Let $\pi$ be an irreducible generic representation of $G$
and 
$W$ the newform in $\mathcal{W}(\pi, \psi)$
such that $W(1)= 1$.
Then we have
\[
Z(s, W) = L(s, \pi).
\]
\end{thm}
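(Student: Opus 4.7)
The plan is to substitute the explicit formula from Theorem~\ref{thm:main} into the definition of $Z(s, W)$ and recognize the resulting series as the $L$-factor expansion. The computation has three essentially routine ingredients plus one combinatorial identification.

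First, I would reduce the integral to a sum over $\Z$. Since $t(u) = \mathrm{diag}(u,1,\ldots,1) \in K_{c(\pi)}$ for $u \in \ri^\times$ (its last row is exactly $(0,\ldots,0,1)$) and $W$ is right $K_{c(\pi)}$-invariant, while $t(u)$ and $t(\p^m)$ commute, the function $a \mapsto W(t(a))$ is constant on the cosets $\ri^\times \p^m$. With the chosen normalization $\mathrm{vol}(\ri^\times) = 1$, the integral breaks up as
\[
Z(s, W) = \sum_{m \in \Z} W(t(\p^m))\, q^{-m(s-(n-1)/2)}.
\]
Next, note that $t(\p^m) = \p^{(m,0,\ldots,0)}$. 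By Proposition~\ref{prop:ij}, the summand vanishes whenever $m < 0$.

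For $m \geq 0$, I would apply Theorem~\ref{thm:main} with $f = (m,0,\ldots,0)$ and $W(1)=1$. One computes $\delta_B^{1/2}(\p^f) = q^{-(n-1)m/2}$, and identifies the Schur polynomial: for the partition $(m,0,\ldots,0)$, $s_f$ is the complete homogeneous symmetric polynomial $h_m$ in the $n$ variables $\alpha_1,\ldots,\alpha_n$ (this is the standard specialization $s_{(m)} = h_m$, independent of whether $c(\pi) = 0$ or $c(\pi) > 0$, because $\alpha_n = 0$ in the latter case forces $h_m(\alpha_1,\ldots,\alpha_{n-1},0) = h_m(\alpha_1,\ldots,\alpha_{n-1})$, consistent with the identification in the paragraph before Theorem~\ref{thm:main}). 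Thus
\[
W(t(\p^m)) = q^{-(n-1)m/2}\, h_m(\alpha_1,\ldots,\alpha_n).
\]

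Substituting, the two factors of $q^{(n-1)m/2}$ cancel and I obtain
\[
Z(s, W) = \sum_{m \geq 0} h_m(\alpha_1,\ldots,\alpha_n)\, q^{-ms}.
\]
The generating function identity $\sum_{m \geq 0} h_m(\alpha) X^m = \prod_{i=1}^{n}(1-\alpha_i X)^{-1}$ with $X = q^{-s}$ gives exactly $L(s, \pi)$, finishing the proof. There is no real obstacle: the only point that requires any care is making sure the normalizing power of $\delta_B^{1/2}$ matches the shift $|a|^{s-(n-1)/2}$ so that the Schur-polynomial sum is the clean geometric-type generating function above.
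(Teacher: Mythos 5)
Your proof is correct and follows essentially the same route as the paper: invoke Proposition~\ref{prop:ij} and Theorem~\ref{thm:main} to reduce $Z(s,W)$ to $\sum_{m\geq 0} s_{(m,0,\ldots,0)}(\alpha)\,q^{-ms}$, identify $s_{(m,0,\ldots,0)}$ with the complete homogeneous symmetric polynomial $h_m$ via \cite{Mac}, and conclude by the generating-function identity $\sum_m h_m(\alpha)X^m = \prod_i (1-\alpha_i X)^{-1}$. The only difference is that you spell out the reduction from the integral over $F^\times$ to a sum over $\Z$ (via right $K_{c(\pi)}$-invariance of $W$ and $t(\ri^\times) \subset K_{c(\pi)}$), which the paper leaves implicit.
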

\begin{proof}
By Proposition~\ref{prop:ij}
and Theorem~\ref{thm:main},
we obtain
\begin{eqnarray*}
Z(s, W) & = & \sum_{k =0}^\infty W(t(\p^k)) |\p^k|^{s-\frac{n-1}{2}}
=
\sum_{k =0}^\infty \delta_B^{1/2}(t(\p^k))s_{(k, 0, \ldots, 0)}(\alpha) |\p^k|^{s-\frac{n-1}{2}}\\
&=&
\sum_{k =0}^\infty s_{(k, 0, \ldots, 0)}(\alpha) q^{-ks}.
\end{eqnarray*}
It follows from \cite{Mac} Chapter I (3.4) that
$s_{(k, 0, \ldots, 0)}(\alpha)$ 
is the complete homogeneous symmetric polynomial of
degree $k$,
that is,
\[
s_{(k, 0, \ldots, 0)}(\alpha)
= \sum_{k_1+\cdots+k_n = k} \alpha_1^{k_1}\cdots \alpha_n^{k_n}.
\]
Hence we get
\begin{eqnarray*}
Z(s, W) & = & 
\sum_{k =0}^\infty \left(\sum_{k_1+\cdots+k_n = k} \alpha_1^{k_1}\cdots \alpha_n^{k_n}\right) q^{-ks}
=
\prod_{i = 1}^n (\sum_{k_i= 0}^\infty \alpha_i^{k_i}q^{-k_is})\\
& = & 
\prod_{i= 1}^n (1-\alpha_iq^{-s})^{-1}
= L(s, \pi),
\end{eqnarray*}
as required.
\end{proof}

\end{document}